\documentclass[11pt]{article}

\usepackage[a4paper,margin=1in]{geometry}
\usepackage[T1]{fontenc}
\usepackage[utf8]{inputenc}
\usepackage{lmodern}
\usepackage{microtype}
\usepackage{amsmath,amssymb,amsthm,mathtools}
\usepackage{booktabs}
\usepackage{array}
\usepackage{tabularx}
\usepackage{xurl}
\usepackage{placeins}
\usepackage{hyperref}
\usepackage{authblk}

\allowdisplaybreaks
\numberwithin{equation}{section}

\hypersetup{
  unicode=true,
  colorlinks=true,
  linkcolor=blue,
  citecolor=blue,
  urlcolor=blue,
  pdftitle={A complete classification for an Erdos unimodality problem on prime divisors},
  pdfauthor={Shouqiao Wang and Davide Crapis}
}
\theoremstyle{plain}
\newtheorem{theorem}{Theorem}[section]
\newtheorem{lemma}[theorem]{Lemma}
\newtheorem{proposition}[theorem]{Proposition}
\newtheorem{corollary}[theorem]{Corollary}

\theoremstyle{definition}
\newtheorem{certificate}[theorem]{Certificate}

\theoremstyle{remark}

\newcolumntype{Y}{>{\raggedright\arraybackslash}X}

\newcommand{\e}{\mathrm e}
\newcommand{\eps}{\varepsilon}

\title{
A Complete Answer to Erdős Problem 690\\
{\large Discovered by the Multiscalar Fields System}
}

\author{Shouqiao Wang}
\author{Davide Crapis}
\affil{Multiscalar Intelligence}

\date{}

\begin{document}

\maketitle

\begin{abstract}
Let \(d_k(p)\) denote the natural density of positive integers whose \(k\)-th smallest prime divisor is \(p\).  Erd\H{o}s asked whether, for each fixed \(k\), the sequence \(p\mapsto d_k(p)\) is unimodal as \(p\) ranges over the primes.  Cambie proved that unimodality holds for \(1\le k\le3\) and verified non-unimodality for \(4\le k\le20\).  We prove that \(p\mapsto d_k(p)\) is not unimodal for every \(k\ge4\), completing the classification. An exact first-difference criterion reduces the problem to comparing a symmetric-polynomial ratio with prime gaps.  Explicit estimates for prime-counting functions, certified finite computations, one certified large prime gap, one certified twin prime, and a uniform Chinese-remainder construction then produce, for every \(k\ge4\), a strict descent followed by a later strict ascent.
\end{abstract}

\section{Introduction}

We developed the Multiscalar Fields System to automate large-scale mathematical exploration, theorem discovery and verification. In this paper, we present a proof of Erd\H{o}s Problem 690 discovered by Fields with limited human interaction. Starting from an initial problem specification, Fields explored the space of candidate constructions and arguments, evaluated their mathematical viability, and iteratively refined them until they converged to the formal math proof presented here. Human involvement was limited to formulating the problem, auditing the generated proof and verifying the numerical computations.

\medskip
\noindent \textbf{Problem.}
Let \(d_k(p)\) be the natural density of positive integers \(n\) for which \(p\) is the \(k\)-th smallest prime divisor of \(n\).  Erd\H{o}s asked whether, for fixed \(k\ge1\), the sequence
\[
        p\longmapsto d_k(p),
\]
indexed by the primes in increasing order, is unimodal \cite{Erdos1979}; see also the modern formulation in \cite{Bloom690}.  Here an infinite sequence is called unimodal if it is nondecreasing up to some index and nonincreasing after that index.

Cambie \cite{Cambie2025} proved that \(d_k(p)\) is unimodal for \(1\le k\le3\) and verified non-unimodality in the initial range \(4\le k\le20\).  The present paper gives a proof of non-unimodality for every \(k\ge4\), including an independent verification of the finite initial range.

\begin{theorem}\label{thm:main}
For every integer
\[
        k\ge 4,
\]
the sequence
\[
        p\longmapsto d_k(p),
\]
where \(p\) runs through the primes in increasing order, is not unimodal.
\end{theorem}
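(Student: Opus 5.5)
Writing \(p_1<p_2<\dots\) for the primes, the density is
\[
d_k(p)=\frac1p\prod_{q<p}\Bigl(1-\frac1q\Bigr)\,e_{k-1}\Bigl(\frac{1}{q-1}:q<p\Bigr),
\]
where \(e_j\) is the \(j\)-th elementary symmetric polynomial of the numbers \(1/(q-1)\) over the primes \(q<p\). This holds because \(n\) must be divisible by \(p\) and by exactly \(k-1\) of the primes \(q<p\), and coprime to all the others. Set \(P=\prod_{q<p}(1-1/q)\) and \(S_j(p)=e_j(1/(q-1):q<p)\). Passing from \(p\) to its successor \(p'\) multiplies \(P\) by \((p-1)/p\), and the identity \(S_{k-1}(p')=S_{k-1}(p)+S_{k-2}(p)/(p-1)\) holds. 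Hence
\[
\frac{d_k(p')}{d_k(p)}=\frac{p-1}{p'}\Bigl(1+\frac{R_k(p)}{p-1}\Bigr)=\frac{p-1+R_k(p)}{p'},\qquad R_k(p):=\frac{S_{k-2}(p)}{S_{k-1}(p)}.
\]
This is the exact first-difference criterion: \(d_k(p')<d_k(p)\) if and only if \(p'-p>R_k(p)-1\), and \(d_k(p')>d_k(p)\) if and only if \(p'-p<R_k(p)-1\). Non-unimodality then follows once we exhibit consecutive primes \(p<p'\) with \(p'-p>R_k(p)-1\) (a strict descent), followed by a later consecutive pair \(r<r'\) with \(r'-r<R_k(r)-1\) (a strict ascent). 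Note that \(R_k(p)\) is decreasing in \(p\) in the relevant sense, since \(S_{k-1}\) grows faster than \(S_{k-2}\). Roughly, \(R_k(p)\approx (k-1)/\log\log p\) once \(\log\log p\gg k\). For small \(p\), \(R_k\) is large, because fewer than \(k-1\) small primes make \(S_{k-1}\) tiny.

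The ascent is cheap. Eventually \(R_k\) falls below \(3\), and the twin-prime gap condition \(2<R_k(r)-1\) would require \(R_k(r)>3\), so the ascent must occur in the early range where \(R_k\) is still large. The plan is therefore to find a descent early and an ascent after it.

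\textbf{Descent.} The natural choice is to use a large prime gap \(g=p'-p\) at a point where \(R_k(p)\) is still moderate. Near the mode, \(R_k(p)-1\) is roughly the average gap \(\log p\), since \(d_k\) peaks where \(R_k\approx\log p\). Around that point, gaps exceeding \(\log p\) exist in abundance, and their existence is made unconditional and uniform in \(k\) by a Chinese-remainder (Jacobsthal-type) construction of a run of composites.

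\textbf{Ascent.} After that descent we need a later gap smaller than \(R_k-1\). Twin primes are unavailable unconditionally in every window, so we will instead use the fact that, before the mode, \(R_k-1\) exceeds the average gap. Prime-counting estimates of Dusart type show that every window \([x,x+h]\) with \(h\) comparable to \(x/\log^2 x\) contains many primes, so some gap there is below average. The cleanest arrangement is:
\begin{enumerate}
\item Choose a CRT-constructed gap at a location \(x_0\) well before the mode, where \(R_k(x_0)-1\) is still larger than \(\log x_0\) but smaller than the constructed gap length. Choosing the moduli from the primes below \(k\)-dependent bounds makes the construction uniform in \(k\).
\item Show that shortly after \(x_0\), still before the mode, the primes in a short window have average spacing less than \(R_k-1\). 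This uses explicit bounds for \(\pi(x)\) together with monotonicity and explicit bounds for \(R_k\), obtained from Mertens-type estimates for \(\sum 1/(q-1)\) and the Newton inequalities \(S_{k-2}S_k\le c\,S_{k-1}^2\).
\end{enumerate}

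The main obstacle is step 1: making the CRT gap length exceed \(R_k(x_0)-1\) at an \(x_0\) that still leaves room for step 2, with explicit constants valid for all \(k\). The competing requirements are, first, that Jacobsthal-type gaps are only about \(\log x\cdot(\log\log x)\)-times large unless the modulus is huge, and, second, that \(R_k\) must be controlled from both sides by explicit symmetric-function estimates. I expect this to work for \(k\ge k_0\) with \(k_0\) explicit but large. The remaining range \(4\le k<k_0\) would then be closed by certified finite computation, namely evaluating \(R_k\) exactly at primes and locating a descent followed by an ascent, the latter possibly at a certified twin prime or via a single certified large gap.
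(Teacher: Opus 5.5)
\textbf{Verdict: genuine gap.} Your architecture matches the paper's: the same exact criterion (your $R_k(p)$ is the paper's $R_{k-1}(i-1)$ with $p=p_i$), a Chinese-remainder gap for the descent, a later averaging argument for the ascent, and certified prime-record data for whatever the uniform argument misses. But the proposal stops where the proof has to begin, and each decisive step is missing.

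First, you need a two-sided bound for $R$ where the uniform argument operates, namely $\log p\asymp k/\log k$, so $\log\log p\approx\log k$. That is not the regime $\log\log p\gg k$ you describe, where $R<1$. Newton's inequalities give monotonicity and the lower bound $R_r(i)\ge r/A(p_i)$, with $A(p_i)=\sum_{j\le i}1/(p_j-1)$, but no upper bound. The paper's key lemma is $R_r(i)\le r/(A(p_i)-W_{r-1})$, where $W_{r-1}$ is the sum of the $r-1$ largest weights. It follows from $A(p_i)E_{r-1}=rE_r+\Sigma$ with $0\le\Sigma\le W_{r-1}E_{r-1}$. This pins $R$ down up to the factor $A/(A-W_{r-1})=1+O(\log\log r/\log r)$.

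Second, you leave your ``main obstacle'' open, and it is easier than you suggest: no Erd\H{o}s--Rankin enlargement is needed. The elementary CRT block built from all primes $\le q$ makes $2q^- -1$ consecutive integers composite. So with $P=\prod_{p\le q}p$, the interval $(2P,4P)$ contains a gap $G_{-}\ge 2q^-\approx 2\log P$, twice the average gap. Over the dyadic window $(4P,8P]$, Dusart's bounds for $\pi$ already force some gap $G_{+}<\log(8P)$, so no short-interval counts are required. This factor $2$ beats the uncertainty factor for $R$. Taking $q$ just above $x=0.99r/\log r$ then gives $R<1.805x<G_{-}$ at the descent and $R>1.010x>G_{+}+1$ at the ascent. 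With explicit constants this closes only for $r\ge 8{,}600{,}001$.

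Third, the remaining range cannot be done by ``evaluating $R_k$ exactly at primes''. For $k$ in the millions the relevant primes have $\log p\approx k/\log k$, far beyond enumeration. Your hint at a certified twin prime or large gap is the right device, but it works only together with the two-sided bounds. The paper applies those bounds at one published certified gap of length $1{,}113{,}106$ near $10^{18661}$, followed by one certified twin prime near $10^{71297}$. This handles all $40\le r\le 8{,}600{,}000$ at once, and the tail constants were tuned so the two ranges meet. With $k_0$ unspecified, nothing in your plan shows that an available certificate bridges the intermediate range, so as written it does not prove the theorem.
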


Together with Cambie's result for \(1\le k\le3\), Theorem \ref{thm:main} gives the following complete answer to the Erd\H{o}s question.

\begin{corollary}\label{cor:classification}
The sequence \(p\mapsto d_k(p)\) is unimodal for \(1\le k\le3\), and is not unimodal for every \(k\ge4\).
\end{corollary}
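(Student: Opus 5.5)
The corollary follows by combining two results already in hand, and no new argument is needed beyond checking that they cover all cases and use the same notion of unimodality. The plan is a case split on \(k\). For \(1\le k\le 3\), I will invoke Cambie's theorem \cite{Cambie2025}, which asserts exactly that \(p\mapsto d_k(p)\) is unimodal in these cases. For \(k\ge4\), I will invoke Theorem \ref{thm:main}, which asserts non-unimodality for every such \(k\). Since the positive integers split as \(\{1,2,3\}\cup\{k\ge4\}\), every \(k\ge1\) is covered, and the two conclusions are mutually exclusive, so together they give the stated classification.

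The only point needing care is consistency of definitions. I must confirm that Cambie's notion of unimodality matches the one fixed in the introduction: the sequence is nondecreasing up to some index and nonincreasing afterwards, indexed by the primes in increasing order. I must also confirm that \(d_k(p)\) is defined the same way, as the natural density of integers whose \(k\)-th smallest prime divisor is \(p\).

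If Cambie used a stricter or weaker variant, for instance strict monotonicity or weak unimodality allowing plateaus, I would check which direction is needed. For \(k\le3\) we need his unimodality to imply ours. Because our definition uses non-strict inequalities, any strict version implies it, so no problem arises. For \(k\ge 4\) the non-unimodality comes from Theorem \ref{thm:main} itself, stated with our definition: a strict descent followed by a later strict ascent. That pattern violates unimodality under any of the usual variants.

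There is no genuine obstacle here; the substantive difficulty lies entirely in Theorem \ref{thm:main}. The corollary is a bookkeeping step whose only potential pitfall is the definitional alignment just described.
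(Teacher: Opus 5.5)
Your proposal is correct and matches the paper's argument: the corollary follows by combining Cambie's theorem for \(1\le k\le3\) with Theorem \ref{thm:main} for \(k\ge4\). Your extra check that the two notions of unimodality agree is a step the paper leaves implicit.
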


We shall repeatedly use the following elementary observation.  If a sequence has a strict descent \(a_i>a_{i+1}\) and, at a later adjacent pair, a strict ascent \(a_j<a_{j+1}\) with \(i<j\), then it is not unimodal.  Indeed, a mode would have to lie at or before \(i\), because of the descent, and at or after \(j+1\), because of the ascent.

The proof has two conceptual parts.  First we derive an exact threshold criterion for the sign of a first difference of \(d_k(p)\).  The criterion compares a ratio \(R_r\) of elementary symmetric functions with the adjacent prime gap.  Second we produce, for each \(k\), an earlier prime gap that is large enough to force a strict descent and a later prime gap that is small enough to force a strict ascent.  The finite range uses certified computations and two published prime-record certificates.  The infinite tail uses a Chinese-remainder construction to manufacture a long block of composite integers, followed by an average-gap argument that finds a smaller later gap.

The organization is as follows.  Section \ref{sec:notation} fixes notation.  Section \ref{sec:structural} proves the two structural comparison lemmas.  Section \ref{sec:inputs} records the analytic estimates, numerical constants, and external certificates.  Section \ref{sec:finite} handles all \(4\le k\le 8{,}600{,}001\).  Section \ref{sec:tail} gives a uniform construction for every remaining \(k\).  Appendix \ref{app:verification} describes the companion numerical verifier.

\section{Notation}\label{sec:notation}

We index the prime sequence:
\[
        p_1=2,\quad p_2=3,\quad p_3=5,\quad \ldots .
\]
Let
\[
        g_i:=p_{i+1}-p_i
\]
for \(i\ge1\).
For an integer \(m\ge 0\) and \(i\ge0\), let \(\delta_m(i)\) be the natural density of integers \(n\) which are divisible by exactly \(m\) primes from the set
\[
        \{p_1,\ldots,p_i\}.
\]
For \(i=0\) this set is empty, so
\[
        \delta_0(0)=1,\qquad \delta_m(0)=0\quad(m>0).
\]

The elementary independence of divisibility by distinct primes gives, for \(i\ge1\) and \(m\ge0\), with \(\delta_{-1}(i-1)=0\),
\[
        \delta_m(i)=\left(1-\frac1{p_i}\right)\delta_m(i-1)
              +\frac1{p_i}\delta_{m-1}(i-1).
\]
Also, for \(i\ge1\),
\[
        d_{m+1}(p_i)=\frac{\delta_m(i-1)}{p_i}.
\]

For \(r\ge 1\) define, whenever the denominator is positive,
\[
        R_r(i):=\frac{\delta_{r-1}(i)}{\delta_r(i)}.
\]

\section{Structural Lemmas}
\label{sec:structural}

\begin{lemma}[The threshold criterion]\label{lem:threshold}
Let \(r\ge 1\) and \(i\ge1\).  Suppose \(\delta_r(i-1)>0\).  Then
\[
        d_{r+1}(p_{i+1})>d_{r+1}(p_i)
        \quad\Longleftrightarrow\quad
        R_r(i-1)>g_i+1.
\]
Consequently, if \(R_r(i-1)<g_i+1\), then
\[
        d_{r+1}(p_{i+1})<d_{r+1}(p_i).
\]
\end{lemma}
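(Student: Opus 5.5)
The plan is to write both densities through the recursion of Section~\ref{sec:notation} and reduce the comparison to one linear inequality between $\delta_{r-1}(i-1)$ and $\delta_r(i-1)$. By the formula $d_{m+1}(p_j)=\delta_m(j-1)/p_j$ with $m=r$, we have
\[
d_{r+1}(p_{i+1})=\frac{\delta_r(i)}{p_{i+1}},\qquad d_{r+1}(p_i)=\frac{\delta_r(i-1)}{p_i}.
\]
Since $p_i,p_{i+1}>0$, the inequality $d_{r+1}(p_{i+1})>d_{r+1}(p_i)$ is equivalent to $p_i\,\delta_r(i)>p_{i+1}\,\delta_r(i-1)$.

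Next, expand $\delta_r(i)$ by the recursion. Here $r\ge1$, so $\delta_{r-1}(i-1)$ is a genuine density and the convention $\delta_{-1}=0$ is never needed. The recursion gives
\[
p_i\,\delta_r(i)=(p_i-1)\,\delta_r(i-1)+\delta_{r-1}(i-1).
\]
Substituting and using $p_{i+1}=p_i+g_i$, the inequality becomes
\[
\delta_{r-1}(i-1)>(p_{i+1}-p_i+1)\,\delta_r(i-1)=(g_i+1)\,\delta_r(i-1).
\]
Dividing by $\delta_r(i-1)>0$ turns this into $R_r(i-1)>g_i+1$, which is exactly the stated equivalence. The hypothesis $\delta_r(i-1)>0$ is what makes $R_r(i-1)$ defined and lets us divide without reversing the inequality.

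For the ``consequently'' part, I would not argue from the equivalence by contraposition, since that only yields a non-strict inequality. Instead, note that every step above is either multiplication by a positive quantity or an algebraic identity. So the same chain holds verbatim with $>$ replaced by $<$ (or by $=$). Hence $R_r(i-1)<g_i+1$ is equivalent to $d_{r+1}(p_{i+1})<d_{r+1}(p_i)$, which gives the strict descent. I expect no real obstacle here. The only points that need care are checking the index shift in $d_{m+1}(p_i)=\delta_m(i-1)/p_i$ and stating the positivity of $\delta_r(i-1)$ explicitly.
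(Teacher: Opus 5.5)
Your proposal is correct and follows essentially the same route as the paper's proof. Both multiply the difference by $p_ip_{i+1}$, expand $p_i\delta_r(i)$ via the recursion to obtain the sign of $\delta_{r-1}(i-1)-(g_i+1)\delta_r(i-1)$, and divide by $\delta_r(i-1)>0$. Your remark that the strict-descent consequence comes from running the same sign computation with $<$, rather than from contraposition, matches how the paper's sign-based argument delivers it.
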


\begin{proof}
By the recurrence for \(\delta_r(i)\),
\[
\begin{aligned}
d_{r+1}(p_{i+1})-d_{r+1}(p_i)
&=\frac{\delta_r(i)}{p_{i+1}}-\frac{\delta_r(i-1)}{p_i}.
\end{aligned}
\]
Multiplying by the positive number \(p_i p_{i+1}\), the sign is the sign of
\[
        p_i\delta_r(i)-p_{i+1}\delta_r(i-1).
\]
Now
\[
        p_i\delta_r(i)
        =(p_i-1)\delta_r(i-1)+\delta_{r-1}(i-1),
\]
and since \(p_{i+1}=p_i+g_i\), we get
\[
\begin{aligned}
p_i\delta_r(i)-p_{i+1}\delta_r(i-1)
&=\delta_{r-1}(i-1)+(p_i-1-p_{i+1})\delta_r(i-1)\\
&=\delta_{r-1}(i-1)-(g_i+1)\delta_r(i-1).
\end{aligned}
\]
Since \(\delta_r(i-1)>0\), the asserted equivalence follows after division by
\(\delta_r(i-1)\).
\end{proof}

For the second lemma set
\[
        w_j:=\frac1{p_j-1}\quad(j\ge1),
        \qquad
        E_m(i):=e_m(w_1,\ldots,w_i),
\]
where \(e_m\) is the \(m\)-th elementary symmetric polynomial.  Also put
\[
        A(y):=\sum_{p\le y}\frac1{p-1},
        \qquad
        A(y^-):=\sum_{p<y}\frac1{p-1},
        \qquad
        W_N:=\sum_{j=1}^{N}\frac1{p_j-1}.
\]
The sum defining \(W_N\) is empty when \(N=0\).  Thus \(W_N\) is the sum of the first \(N\) weights, and for every \(N\ge1\),
\[
        W_N=A(p_N).
\]

\begin{lemma}[Symmetric-polynomial bounds]\label{lem:Rbounds}
Let \(r\ge 1\), and suppose that \(i\ge r\).  Then \(\delta_r(i)>0\), and
\[
        \frac r{A(p_i)}
        \le
        R_r(i)
        \le
        \frac r{A(p_i)-W_{r-1}} .
\]
The denominator in the upper bound is positive because \(i\ge r\).
\end{lemma}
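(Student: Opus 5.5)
We first express \(\delta_m(i)\) through elementary symmetric polynomials. Divisibility by distinct primes is independent, so \(\delta_m(i)\) is the coefficient of \(t^m\) in \(\prod_{j\le i}\bigl((1-1/p_j)+t/p_j\bigr)\). Writing \((1-1/p_j)+t/p_j=(1-1/p_j)(1+w_j t)\), where \(w_j=1/(p_j-1)\), we get
\[
        \delta_m(i)=P_i\,E_m(i),\qquad P_i:=\prod_{j\le i}\Bigl(1-\frac1{p_j}\Bigr)>0 .
\]
Hence \(R_r(i)=E_{r-1}(i)/E_r(i)\). Since every \(w_j>0\) and \(i\ge r\), we have \(E_r(i)>0\), so \(\delta_r(i)>0\). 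Note also that \(A(p_i)=W_i=E_1(i)\).

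For the lower bound we want \(r\,E_r\le E_1E_{r-1}\). Expanding \(E_1E_{r-1}=\sum_j w_j\sum_{|S|=r-1}\prod_{S}w\), each \(r\)-subset \(T\) arises exactly \(r\) times, from the pairs \((j,S)\) with \(j\in T\) and \(S=T\setminus\{j\}\). The remaining terms, with \(j\in S\), are nonnegative. This gives \(E_1E_{r-1}=rE_r+(\text{nonneg})\ge rE_r\), and therefore \(R_r(i)\ge r/A(p_i)\).

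For the upper bound we want \(rE_r\ge (E_1-W_{r-1})E_{r-1}\). We again use \(rE_r=\sum_{|S|=r-1}\prod_S w\cdot\sum_{j\notin S}w_j\). For each \((r-1)\)-set \(S\) we have
\[
\sum_{j\notin S}w_j=E_1-\sum_{j\in S}w_j\ge E_1-W_{r-1}.
\]
This holds because the \(w_j\) are decreasing in \(j\): the primes increase, so the \(r-1\) largest weights are \(w_1,\dots,w_{r-1}\), and their sum is \(W_{r-1}\). Summing over \(S\) gives \(rE_r\ge(E_1-W_{r-1})E_{r-1}\). Then \(R_r(i)\le r/(A(p_i)-W_{r-1})\), provided the denominator is positive. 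Positivity holds because \(A(p_i)-W_{r-1}=\sum_{j=r}^{i}w_j\ge w_r>0\) when \(i\ge r\).

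The only points needing care are the product factorization and the monotonicity of the weights, which justifies bounding \(\sum_{j\in S}w_j\) by \(W_{r-1}\). We also note the edge case \(r=1\): there \(E_0=1\), \(W_0=0\), and both bounds collapse to the identity \(R_1=1/E_1\).
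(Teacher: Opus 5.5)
Your proposal is correct and follows the paper's argument: the paper also factors $\delta_m(i)=\bigl(\prod_{j\le i}(1-1/p_j)\bigr)E_m(i)$ and expands $A(p_i)E_{r-1}(i)=rE_r(i)+\Sigma_i$, where $\Sigma_i$ is your collection of ``remaining terms with $j\in S$''. It then bounds $0\le\Sigma_i\le W_{r-1}E_{r-1}(i)$ using the monotonicity of the weights, which gives both of your inequalities. The differences are cosmetic: you obtain the factorization from a generating function rather than by summing over subsets, and you state the upper bound as $rE_r\ge(E_1-W_{r-1})E_{r-1}$ rather than as an upper bound on $\Sigma_i$.
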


\begin{proof}
For a fixed subset \(S\subseteq\{1,\ldots,i\}\), the contribution of the event that exactly the primes \(p_j\) with \(j\in S\) divide \(n\) is
\[
        \prod_{j\in S}\frac1{p_j}
        \prod_{j\notin S}\left(1-\frac1{p_j}\right)
        =
        \left(\prod_{j=1}^i\frac{p_j-1}{p_j}\right)
        \prod_{j\in S}\frac1{p_j-1}.
\]
Therefore
\[
        \delta_m(i)
        =
        \left(\prod_{j=1}^i\frac{p_j-1}{p_j}\right)E_m(i),
\]
and hence
\[
        R_r(i)=\frac{E_{r-1}(i)}{E_r(i)}.
\]

Let
\[
        \alpha_i:=A(p_i)=\sum_{j=1}^i w_j .
\]
Expanding \(\alpha_iE_{r-1}(i)\), we obtain
\[
        \alpha_iE_{r-1}(i)=rE_r(i)+\Sigma_i,
\]
where
\[
        \Sigma_i
        :=
        \sum_{\substack{S\subseteq\{1,\ldots,i\}\\ |S|=r-1}}
        \left(\sum_{s\in S}w_s\right)\prod_{s\in S}w_s
        \ge 0.
\]
Since \(w_1\ge w_2\ge w_3\ge\cdots\), every set \(S\) of size \(r-1\) satisfies
\[
        \sum_{s\in S}w_s\le w_1+\cdots+w_{r-1}=W_{r-1},
\]
with the right-hand side interpreted as the empty sum when \(r=1\).
Thus
\[
        0\le \Sigma_i\le W_{r-1}E_{r-1}(i).
\]
Dividing the identity
\[
        \alpha_iE_{r-1}(i)=rE_r(i)+\Sigma_i
\]
by \(E_{r-1}(i)>0\), and writing
\[
        s_i:=\frac{\Sigma_i}{E_{r-1}(i)},
\]
we get
\[
        0\le s_i\le W_{r-1},
        \qquad
        R_r(i)=\frac r{A(p_i)-s_i}.
\]
The lower bound follows from \(s_i\ge 0\).  Since \(i\ge r\),
\[
        A(p_i)-W_{r-1}
        =\sum_{j=r}^{i}w_j
        \ge w_r>0.
\]
Together with \(s_i\le W_{r-1}\), this gives
\[
        A(p_i)-s_i\ge A(p_i)-W_{r-1}>0,
\]
and hence the stated upper bound.
\end{proof}

\section{Explicit Estimates and Certificates}
\label{sec:inputs}

We shall use the following known explicit estimates.

\begin{lemma}[Explicit estimates for primes]\label{lem:dusart}
The following estimates hold.
\begin{enumerate}
\item For every \(x\ge 3275\), there is a prime \(\ell\) such that
\begin{equation}\label{eq:short-interval-prime}
x<\ell\le x\left(1+\frac1{2\log^2 x}\right).
\end{equation}
\item For \(x>0\),
\begin{equation}\label{eq:theta-upper}
\vartheta(x)-x<\frac{x}{36260}.
\end{equation}
For \(x>2\),
\begin{equation}\label{eq:theta-lower}
\vartheta(x)>x\left(1-\frac{1.2323}{\log x}\right).
\end{equation}
\item For \(x>5393\),
\begin{equation}\label{eq:pi-lower}
\pi(x)\ge \frac{x}{\log x-1}.
\end{equation}
For \(x>60184\),
\begin{equation}\label{eq:pi-upper}
\pi(x)\le \frac{x}{\log x-1.1}.
\end{equation}
\item For the primes \(p_n\) indexed as in Section \ref{sec:notation}, if \(n>688383\),
\begin{equation}\label{eq:nth-prime-upper}
p_n
        \le
        n\left(
        \log n+\log\log n-1+
        \frac{\log\log n-2}{\log n}
        \right).
\end{equation}
\end{enumerate}
\end{lemma}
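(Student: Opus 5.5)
This lemma is a collection of known explicit results, so the proof consists of matching each item to a published source. Where the published statement is phrased differently, it also needs a short reduction. I would not reprove any analytic number theory here; I would cite the relevant theorems and check the numerical thresholds and directions of the inequalities.

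For item 1, the short-interval statement is Dusart's result (Dusart 2018, Explorations in explicit number theory / the Ramanujan Journal paper). It says that for \(x\ge 3275\) the interval \((x, x(1+1/(2\log^2 x))]\) contains a prime. I would cite it verbatim.

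For item 2, the upper bound \(\vartheta(x)-x<x/36260\) for all \(x>0\) is due to Büthe, via his analytic/computational bound, and is also recorded in Dusart's survey. The lower bound \(\vartheta(x)>x(1-1.2323/\log x)\) for \(x>2\) is likewise in Dusart (2018, Theorem 4.2-type statements), derived from Rosser–Schoenfeld-style estimates.

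For item 3, \(\pi(x)\ge x/(\log x-1)\) for \(x>5393\) and \(\pi(x)\le x/(\log x-1.1)\) for \(x>60184\) are Dusart's 2010 arXiv estimates ("Estimates of some functions over primes without R.H.", Theorem 6.9). For item 4, the upper bound on \(p_n\) for \(n\ge 688383\) is Dusart 2010, Proposition 6.6. The statement here uses \(n>688383\), which is contained in that range.

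The only genuine work is bookkeeping, and this is where I expect the main obstacle: checking that each cited result uses the same normalization. Specifically, I need to confirm that \(\vartheta\) is \(\sum_{p\le x}\log p\) and that the prime indexing starts at \(p_1=2\), as fixed in Section \ref{sec:notation}. I also need to check that strict versus non-strict inequalities and the ranges (\(x\ge\) versus \(x>\)) are at least as strong as stated. If a source states a bound only on a slightly different range, such as \(x\ge x_0\) with \(x_0\) larger, I would bridge the gap by a finite computer check on the intermediate interval. That check would be done with the same certified verifier used in Appendix \ref{app:verification}.
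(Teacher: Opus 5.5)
Your proposal matches the paper, which also gives no proof and attributes item 1 to Dusart's thesis (as quoted by Axler) and items 2--4, including \(\vartheta(x)-x<x/36260\), to Dusart's 2010 explicit estimates. The only adjustment is to your citations: the paper credits the \(x/36260\) bound to Dusart rather than B\"uthe, and the \(x\ge 3275\) short-interval result to Dusart's thesis rather than the 2018 Ramanujan Journal paper; your finite-check fallback for range mismatches is a reasonable safeguard that the paper does not need.
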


The short-interval estimate \eqref{eq:short-interval-prime} is due to Dusart's thesis and is quoted, for example, by Axler \cite{Axler2018}.  The estimates \eqref{eq:theta-upper}, \eqref{eq:theta-lower}, \eqref{eq:pi-lower}, \eqref{eq:pi-upper}, and \eqref{eq:nth-prime-upper} are Dusart's explicit estimates \cite{Dusart2010}.

We also need explicit estimates for \(A(y)\).  Let
\[
        B:=\gamma+\sum_p\left(\log\left(1-\frac1p\right)+\frac1p\right),
\]
the Meissel-Mertens constant, and put
\[
        \eps(y):=\frac1{10\log^2 y}+\frac4{15\log^3 y}.
\]
Dusart's reciprocal-prime estimate gives
\begin{equation}\label{eq:reciprocal-prime-estimate}
-\eps(y)
        \le
        \sum_{p\le y}\frac1p-\log\log y-B
        \le
        \eps(y),
\end{equation}
where the lower bound holds for \(y>1\), and the upper bound holds for \(y>10372\).  This reciprocal-prime estimate is due to Dusart \cite{Dusart2010}.

\begin{certificate}[Numerical constants]\label{cert:constants}
We use the outward-rounded interval
\begin{equation}\label{eq:B-interval}
0.261497212847642<B<0.261497212847643,
\end{equation}
for the Meissel-Mertens constant.  This interval is implied by the tabulated value
\[
        B=0.2614972128476427837554268386\ldots,
\]
consistent with the high-precision computation methods of Languasco and Zaccagnini \cite{LanguascoZaccagnini2010} and with the tabulated decimal expansion \cite{OEISMertens}.

With
\[
        C:=\sum_p\frac1{p(p-1)},
\]
we also use
\begin{equation}\label{eq:C-interval}
0.773156636699192<C<0.773157136700943.
\end{equation}
For this second interval, let \(N=1{,}999{,}993\).  Since all summands are positive,
\[
        \sum_{p\le N}\frac1{p(p-1)}<C.
\]
Moreover,
\[
        C-\sum_{p\le N}\frac1{p(p-1)}
        \le
        \sum_{n\ge N+1}\frac1{n(n-1)}
        =
        \frac1N,
\]
where the last equality follows by telescoping.  The finite partial sum over \(p\le N\) is evaluated with outward rounding in the companion verifier; equivalently, it may be evaluated by exact rational arithmetic and compared with the displayed decimal endpoints by integer cross-multiplication.
\end{certificate}

Since
\[
        \frac1{p-1}=\frac1p+\frac1{p(p-1)},
\]
we have, for \(y>10372\),
\begin{equation}\label{eq:A-upper}
A(y)\le \log\log y+B+\eps(y)+C.
\end{equation}
In particular, using only \(C<1\), we also have
\begin{equation}\label{eq:A-upper-weak}
A(y)\le \log\log y+B+1+\eps(y).
\end{equation}
For \(y>1\), since the second summand is positive,
\begin{equation}\label{eq:A-lower-weak}
A(y)\ge \log\log y+B-\eps(y).
\end{equation}
For large finite inputs in Section \ref{sec:finite-large}, we shall also use the following sharper lower bound.  If \(y\ge 1{,}999{,}993\), then
\begin{equation}\label{eq:A-lower-sharp}
A(y)\ge \log\log y+B_- -\eps(y)+C_- -\frac1{y-1},
\end{equation}
where
\[
        B_-:=0.261497212847642,
        \qquad
        C_-:=0.773156636699192.
\]
Indeed, after subtracting the reciprocal-prime estimate from the identity
\(1/(p-1)=1/p+1/(p(p-1))\), the only point is to lower-bound
\(\sum_{p\le y}1/(p(p-1))\).  Since
\[
        \sum_{p>y}\frac1{p(p-1)}
        \le
        \sum_{n>y}\frac1{n(n-1)}
        \le \frac1{y-1},
\]
this partial sum is at least \(C_- -1/(y-1)\).

All finite decimal inequalities used below are certified by the companion verifier described in Appendix \ref{app:verification}.  The finite prime sums and the elementary symmetric sums are checked exactly, while logarithmic comparisons and the value of \(C\) are checked by rigorous outward-rounded interval arithmetic.

\subsection{Published prime-record certificates}

We use two certified prime-record inputs.

\begin{certificate}[A certified large prime gap \cite{PrimeRecordsGap,PrimeGapList}]\label{input:largegap}
Let \(m\#:=\prod_{p\le m}p\) denote the primorial, where the product is over primes.  Set
\[
        s_L:=587\cdot \frac{43103\#}{2310}-455704.
\]
Since \(2310=11\#\), this is the same number as \(587\cdot(43103\#/11\#)-455704\), the form used in the published record tables.  There is a certified prime gap of length
\[
        g_L=1{,}113{,}106
\]
from \(s_L\) to \(s_L+g_L\).  In particular, \(s_L\) and \(s_L+g_L\) are consecutive primes.  Both endpoints have \(18662\) decimal digits.
\end{certificate}

\begin{certificate}[A certified twin prime \cite{PrimePagesTwin,PrimePagesSpecificTwin}]\label{input:twin}
Let
\[
        s_T:=504983334^{8192}-504983334^{4096}-1.
\]
The published PrimePages record lists this number as a proven twin prime.  Thus \(s_T\) and \(s_T+2\) are primes, and \(s_T\) has \(71298\) decimal digits.
\end{certificate}

These two certificates are used only as published prime-record data.  A fully formal version may replace them by the corresponding primality certificates for the endpoints and compositeness certificates for the intervening integers.

\section{The Finite Range: \texorpdfstring{\(4\le k\le 8{,}600{,}001\)}{4 <= k <= 8,600,001}}
\label{sec:finite}

\subsection{The small cases: \texorpdfstring{\(4\le k\le20\)}{4 <= k <= 20}}

The following table is a certified finite calculation.  For a prime gap \(a\to b\), the notation
\[
        R_r(a^-)
\]
means \(R_r(i-1)\), where \(p_i=a\).  The table is obtained by computing the elementary symmetric sums \(E_m(i)\) exactly from the rational weights \(w_j=1/(p_j-1)\).  The displayed decimals are rounded outward.

\begin{table}[htbp]
\centering
\small
\caption{Exact certificates for the small cases. The first displayed gap in each row gives a strict descent, and the second, later gap gives a strict ascent.}
\label{tab:smallcases}
\setlength{\tabcolsep}{4pt}
\begin{tabularx}{\textwidth}{cYY}
\toprule
\(r\) & Descent certificate & Later ascent certificate\\
\midrule
3 & \(13\to17\), \(g=4\): \(R_3(13^-)<3.506<5\) & \(17\to19\), \(g=2\): \(R_3(17^-)>3.048>3\) \\
4 & \(23\to29\), \(g=6\): \(R_4(23^-)<4.759<7\) & \(29\to31\), \(g=2\): \(R_4(29^-)>4.371>3\) \\
5 & \(31\to37\), \(g=6\): \(R_5(31^-)<6.748<7\) & \(37\to41\), \(g=4\): \(R_5(37^-)>6.263>5\) \\
6 & \(73\to79\), \(g=6\): \(R_6(73^-)<6.437<7\) & \(79\to83\), \(g=4\): \(R_6(79^-)>6.282>5\) \\
7 & \(89\to97\), \(g=8\): \(R_7(89^-)<8.085<9\) & \(97\to101\), \(g=4\): \(R_7(97^-)>7.911>5\) \\
8 & \(113\to127\), \(g=14\): \(R_8(113^-)<9.303<15\) & \(127\to131\), \(g=4\): \(R_8(127^-)>9.145>5\) \\
9 & \(113\to127\), \(g=14\): \(R_9(113^-)<11.677<15\) & \(127\to131\), \(g=4\): \(R_9(127^-)>11.452>5\) \\
10 & \(113\to127\), \(g=14\): \(R_{10}(113^-)<14.414<15\) & \(127\to131\), \(g=4\): \(R_{10}(127^-)>14.101>5\) \\
11 & \(293\to307\), \(g=14\): \(R_{11}(293^-)<12.085<15\) & \(307\to311\), \(g=4\): \(R_{11}(307^-)>12.011>5\) \\
12 & \(293\to307\), \(g=14\): \(R_{12}(293^-)<14.050<15\) & \(307\to311\), \(g=4\): \(R_{12}(307^-)>13.959>5\) \\
13 & \(523\to541\), \(g=18\): \(R_{13}(523^-)<13.651<19\) & \(541\to547\), \(g=6\): \(R_{13}(541^-)>13.607>7\) \\
14 & \(523\to541\), \(g=18\): \(R_{14}(523^-)<15.415<19\) & \(541\to547\), \(g=6\): \(R_{14}(541^-)>15.364>7\) \\
15 & \(523\to541\), \(g=18\): \(R_{15}(523^-)<17.279<19\) & \(541\to547\), \(g=6\): \(R_{15}(541^-)>17.218>7\) \\
16 & \(887\to907\), \(g=20\): \(R_{16}(887^-)<16.752<21\) & \(907\to911\), \(g=4\): \(R_{16}(907^-)>16.721>5\) \\
17 & \(887\to907\), \(g=20\): \(R_{17}(887^-)<18.438<21\) & \(907\to911\), \(g=4\): \(R_{17}(907^-)>18.402>5\) \\
18 & \(887\to907\), \(g=20\): \(R_{18}(887^-)<20.191<21\) & \(907\to911\), \(g=4\): \(R_{18}(907^-)>20.151>5\) \\
19 & \(1129\to1151\), \(g=22\): \(R_{19}(1129^-)<20.742<23\) & \(1151\to1153\), \(g=2\): \(R_{19}(1151^-)>20.711>3\) \\
\bottomrule
\end{tabularx}
\end{table}
\FloatBarrier

For each row, Lemma \ref{lem:threshold} gives a strict descent at the first displayed gap and a strict ascent at the second displayed, later gap.  Hence \(d_{r+1}(p)\) is not unimodal for \(3\le r\le 19\), equivalently
\[
        4\le k\le 20.
\]

\subsection{The ranges \texorpdfstring{\(21\le k\le31\)}{21 <= k <= 31} and \texorpdfstring{\(32\le k\le48\)}{32 <= k <= 48}}

First consider
\[
        20\le r\le 30.
\]
The certified finite sums are
\[
        \sum_{p<15683}\frac1{p-1}
        >3.303755162423773,
\]
\[
        \sum_{p\le 15683}\frac1{p-1}
        <3.303818929800384,
\]
and
\[
        W_{29}<2.612642166507777.
\]
Since \(W_{r-1}\le W_{29}\), Lemma \ref{lem:Rbounds} gives at the gap \(15683\to15727\), whose length is \(44\),
\[
        R_r(15683^-)
        \le
        \frac{30}{3.303755162423773-2.612642166507777}
        <43.409<45=44+1.
\]
Thus Lemma \ref{lem:threshold} gives a strict descent at \(15683\to15727\).

At the next gap \(15727\to15731\), whose length is \(4\), Lemma \ref{lem:Rbounds} gives
\[
        R_r(15727^-)
        \ge
        \frac{20}{3.303818929800384}
        >6.053>5=4+1.
\]
Thus Lemma \ref{lem:threshold} gives a strict ascent at \(15727\to15731\).  Therefore \(d_{r+1}(p)\) is not unimodal for \(20\le r\le 30\), equivalently
\[
        21\le k\le 31.
\]

Next consider
\[
        31\le r\le 47.
\]
The certified finite sums are
\[
        \sum_{p<31397}\frac1{p-1}
        >3.372584257226677,
\]
\[
        \sum_{p\le31397}\frac1{p-1}
        <3.372616108417913,
\]
and
\[
        W_{46}<2.721441010945543.
\]
Since \(W_{r-1}\le W_{46}\), Lemma \ref{lem:Rbounds} gives at the gap \(31397\to31469\), whose length is \(72\),
\[
        R_r(31397^-)
        \le
        \frac{47}{3.372584257226677-2.721441010945543}
        <72.181<73=72+1.
\]
Thus there is a strict descent at \(31397\to31469\).

At the later gap \(31469\to31477\), whose length is \(8\), Lemma \ref{lem:Rbounds} gives
\[
        R_r(31469^-)
        \ge
        \frac{31}{3.372616108417913}
        >9.191>9=8+1.
\]
Thus there is a strict ascent at \(31469\to31477\).  Hence \(d_{r+1}(p)\) is not unimodal for \(31\le r\le47\), equivalently
\[
        32\le k\le 48.
\]

\subsection{A record-gap certificate for \texorpdfstring{\(41\le k\le8{,}600{,}001\)}{41 <= k <= 8,600,001}}
\label{sec:finite-large}

We use the two prime-record certificates from Section \ref{sec:inputs}.

We first show that the twin gap gives a strict ascent for every
\[
        40\le r\le8{,}600{,}000.
\]
Since \(s_T\) has \(71298\) decimal digits,
\[
        10^{71297}\le s_T<10^{71298}.
\]
Because \(\eps(y)\) is decreasing for \(y>1\), by \eqref{eq:A-upper}, \eqref{eq:B-interval}, and \eqref{eq:C-interval},
\[
\begin{aligned}
        A(s_T)
        &<
        \log\log(10^{71298})
        +0.261497212847643
        +\eps(10^{71297})
        +0.773157136700943  \\
        &<13.04331036.
\end{aligned}
\]
Therefore, at the gap \(s_T\to s_T+2\),
\[
        R_r(s_T^-)
        \ge
        \frac r{A(s_T)}
        \ge
        \frac{40}{13.04331036}
        >3.066>3=2+1.
\]
It remains to note that \(R_r(s_T^-)\) is defined throughout this range. Applying \eqref{eq:nth-prime-upper} with \(n=8{,}600{,}000\) gives
\[
        p_{8{,}600{,}000}<152{,}960{,}215<s_T.
\]
Thus at least \(8{,}600{,}000\) primes lie below \(s_T\). By Lemma \ref{lem:threshold}, there is a strict ascent at \(s_T\to s_T+2\) for every \(40\le r\le8{,}600{,}000\).

Now we show that the large gap gives a strict descent for all
\[
        40\le r\le8{,}600{,}000.
\]
Since \(s_L\) has \(18662\) decimal digits,
\[
        s_L>10^{18661}.
\]
Using \eqref{eq:A-lower-sharp}, we get
\[
\begin{aligned}
        A(s_L^-)
        &\ge
        A(10^{18661}) \\
        &>
        \log\log(10^{18661})
        +0.261497212847642
        -\eps(10^{18661})
        +0.773156636699192
        -\frac1{10^{18661}-1}\\
        &>11.70287735.
\end{aligned}
\]
For \(r\le8{,}600{,}000\),
\[
        W_{r-1}\le W_{8{,}599{,}999}=A(p_{8{,}599{,}999}).
\]
By \eqref{eq:nth-prime-upper},
\[
        p_{8{,}599{,}999}\le U,
\]
where
\[
        U:=8{,}599{,}999\left(
        \log(8{,}599{,}999)+\log\log(8{,}599{,}999)-1+
        \frac{\log\log(8{,}599{,}999)-2}{\log(8{,}599{,}999)}
        \right).
\]
Outward-rounded interval arithmetic gives
\[
        U<152{,}960{,}196.
\]
Then \eqref{eq:A-upper}, \eqref{eq:B-interval}, and \eqref{eq:C-interval} give
\[
        W_{r-1}\le A(152{,}960{,}196)<3.9713.
\]
In particular,
\[
        A(s_L^-)>W_{r-1},
\]
so there are at least \(r\) primes below \(s_L\), and Lemma \ref{lem:Rbounds} applies at \(s_L^-\).  Consequently, at the gap \(s_L\to s_L+g_L\),
\[
\begin{aligned}
        R_r(s_L^-)
        &\le
        \frac{r}{A(s_L^-)-W_{r-1}}\\
        &<
        \frac{8{,}600{,}000}{11.70287735-3.9713}\\
        &<1{,}112{,}322
        <1{,}113{,}107
        =g_L+1.
\end{aligned}
\]
Thus Lemma \ref{lem:threshold} gives a strict descent at \(s_L\to s_L+g_L\).

Finally, the strict ascent at \(s_T\to s_T+2\) occurs after this strict descent, because \(s_T\) has \(71298\) decimal digits while both endpoints of the large gap have \(18662\) digits.  Therefore \(d_{r+1}(p)\) is not unimodal for
\[
        40\le r\le8{,}600{,}000,
\]
equivalently
\[
        41\le k\le8{,}600{,}001.
\]

Combining the three finite parts gives the following proposition.

\begin{proposition}\label{prop:finite}
For every integer
\[
        4\le k\le8{,}600{,}001,
\]
the sequence \(p\mapsto d_k(p)\) is not unimodal.
\end{proposition}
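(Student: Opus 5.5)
The proposition follows by assembling the three finite pieces already established in Section \ref{sec:finite}. Throughout I use the correspondence \(k=r+1\): non-unimodality of \(p\mapsto d_k(p)\) is the statement proved there for \(d_{r+1}\). In each range the argument exhibits a strict descent at one adjacent pair of primes and a strict ascent at a strictly later adjacent pair. By the elementary observation in the introduction, such a pair rules out unimodality.

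First, I will invoke Table \ref{tab:smallcases}. For each \(3\le r\le 19\), Lemma \ref{lem:threshold} turns the upper bound on \(R_r(a^-)\) below \(g+1\) into a strict descent at the first displayed gap. It turns the lower bound above \(g+1\) at the immediately following gap into a strict ascent. The hypothesis \(\delta_r(i-1)>0\) holds because in every row the number of primes below \(a\) is at least \(r\), so Lemma \ref{lem:Rbounds} applies. This covers \(4\le k\le 20\).

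Second, I will cite the two uniform computations at \(15683\to15727\to15731\) and at \(31397\to31469\to31477\). These cover \(21\le k\le31\) and \(32\le k\le48\). Both use Lemma \ref{lem:Rbounds}, with \(W_{r-1}\) bounded by \(W_{29}\), respectively \(W_{46}\), in the upper estimate, and with \(r\) bounded below by \(20\), respectively \(31\), in the lower estimate. The only check here is that both bounds are monotone in \(r\) in the right direction. The upper bound \(r/(A-W_{r-1})\) increases with \(r\), so it is maximized at the top of each range. The lower bound \(r/A\) is minimized at the bottom of each range.

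Third, I will use the record-gap argument of Section \ref{sec:finite-large}, which covers \(40\le r\le 8{,}600{,}000\), that is \(41\le k\le 8{,}600{,}001\). It needs three facts:
\begin{itemize}
\item the estimate \(A(s_T)<13.0434\) forces \(R_r(s_T^-)>3\);
\item the estimates \(A(s_L^-)>11.7028\) and \(W_{r-1}<3.9713\) force \(R_r(s_L^-)<g_L+1\);
\item \(s_T>s_L+g_L\), which follows from the digit counts.
\end{itemize}

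The three ranges \([4,20]\), \([21,48]\) and \([41,8{,}600{,}001\)] together cover \(4\le k\le 8{,}600{,}001\), and their overlap is harmless. The main point needing care is not the logic but the trust placed in the certified numerical inequalities, which are delegated to the verifier of Appendix \ref{app:verification}. Apart from that, the only other point to confirm is that each displayed ascent occurs strictly after its descent, which is immediate since in every case the ascent gap begins at the endpoint of the descent gap or later.
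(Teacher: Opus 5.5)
Your proposal is correct and matches the paper's proof. The paper likewise just combines three pieces: Table~\ref{tab:smallcases}, the two computations at \(15683\) and \(31397\), and the record-gap argument of Section~\ref{sec:finite-large}, with each ascent occurring after its descent. The only check your list for the third range leaves out is that at least \(r\) primes lie below \(s_T\) and below \(s_L\), so that Lemma~\ref{lem:Rbounds} applies. The paper verifies this via \eqref{eq:nth-prime-upper} and via \(A(s_L^-)>W_{r-1}\), and it is immediate from the sizes involved.
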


\section{The Uniform Tail: \texorpdfstring{\(k\ge8{,}600{,}002\)}{k >= 8,600,002}}
\label{sec:tail}

We now prove all remaining cases by one uniform argument.  Put
\[
        r:=k-1.
\]
Thus it suffices to prove the claim for every integer
\[
        r\ge8{,}600{,}001.
\]
Set
\[
        v:=\log r,
        \qquad
        x:=0.99\,\frac r{\log r}=0.99\,\frac r v.
\]
Since
\[
        \log(8{,}600{,}001)>15.96,
\]
we have
\begin{equation}\label{eq:v-lower}
v>15.96.
\end{equation}
The function \(r/\log r\) is increasing for \(r>e\), and hence
\begin{equation}\label{eq:x-lower}
x\ge 0.99\frac{8{,}600{,}001}{\log(8{,}600{,}001)}
        >533{,}000.
\end{equation}
Consequently
\begin{equation}\label{eq:x-half-lower}
\frac x2>266{,}500>3275.
\end{equation}
Outward-rounded interval arithmetic gives, for every \(x\) in this range,
\begin{equation}\label{eq:logx-error}
\frac1{2\log^2 x}<0.002876,
\end{equation}
\begin{equation}\label{eq:logxhalf-error}
\frac1{2\log^2(x/2)}<0.00321,
\end{equation}
\begin{equation}\label{eq:theta-error-small}
\frac{1.2323}{\log x}<0.1,
\end{equation}
and
\begin{equation}\label{eq:M-factor-bound}
\left(1+\frac1{2\log^2x}\right)
        \left(1+\frac1{36260}\right)
        +\frac{\log 8}{x}
        <1.003.
\end{equation}
Moreover, if \(\log y>18.9\), then
\begin{equation}\label{eq:eps-small}
\eps(y)<0.001.
\end{equation}

We shall also use the following elementary inequalities.  First,
\begin{equation}\label{eq:tail-elementary-one}
0.44v-2\log v-1.300>0
\end{equation}
for every \(v\ge15.96\).  Indeed, the derivative is \(0.44-2/v>0\) on this interval, and the value at \(15.96\) is \(>0.18\).  Second, since \(v>15.96>1.50\),
\begin{equation}\label{eq:tail-elementary-two}
\frac{v}{0.99(v-1.50)}>\frac1{0.99}>1.010.
\end{equation}

\subsection{A large gap before \texorpdfstring{\(4P\)}{4P}}

By \eqref{eq:short-interval-prime}, choose a prime
\[
        q\in\left(x,\ x\left(1+\frac1{2\log^2x}\right)\right].
\]
Let \(q^-\) be the prime immediately preceding \(q\), and put
\[
        P:=\prod_{p\le q}p.
\]
Then
\[
        \log P=\vartheta(q).
\]
By \eqref{eq:theta-lower}, \(q>x\), and \eqref{eq:theta-error-small},
\begin{equation}\label{eq:logP-lower}
\log P
        =
        \vartheta(q)
        >
        q\left(1-\frac{1.2323}{\log q}\right)
        >
        0.9x.
\end{equation}
By \eqref{eq:theta-upper}, \eqref{eq:logx-error}, and \eqref{eq:M-factor-bound},
\begin{equation}\label{eq:log8P-upper}
\begin{aligned}
        \log(8P)
        &=
        \vartheta(q)+\log 8\\
        &<
        q\left(1+\frac1{36260}\right)+\log8\\
        &\le
        x\left(1+\frac1{2\log^2x}\right)
        \left(1+\frac1{36260}\right)+\log8\\
        &<1.003x.
\end{aligned}
\end{equation}

We now construct a block of consecutive composite integers.  For each prime \(p\le q\), define a residue class \(a_p\) by
\[
        a_p\equiv
        \begin{cases}
        q^- \pmod p,&p<q^-,\\
        q^- -1\pmod {q^-},&p=q^-,\\
        q^- +1\pmod q,&p=q.
        \end{cases}
\]
For every integer
\[
        1\le m\le2q^- -1,
\]
there exists a prime \(p\le q\) such that
\[
        m\equiv a_p\pmod p.
\]
Indeed:
\begin{itemize}
\item if \(1\le m\le q^- -2\), choose a prime divisor \(p\) of \(q^- -m\);
\item if \(m=q^- -1\), choose \(p=q^-\);
\item if \(m=q^-\), choose \(p=2\);
\item if \(m=q^-+1\), choose \(p=q\);
\item if \(q^-+2\le m\le2q^- -1\), choose a prime divisor \(p\) of \(m-q^-\).
\end{itemize}
Each case gives \(m\equiv a_p\pmod p\).

By the Chinese remainder theorem, choose \(a\) modulo \(P\) such that
\[
        a\equiv -a_p\pmod p
        \qquad(p\le q),
\]
and choose the representative \(0\le a<P\).  Then for every
\[
        1\le m\le2q^- -1
\]
there is a prime \(p\le q\) such that
\[
        a+2P+m\equiv a+m\equiv0\pmod p.
\]
Also
\[
        a+2P+m>2P>q\ge p,
\]
so \(a+2P+m\) is composite.  Hence
\[
        a+2P+1,\ a+2P+2,\ \ldots,\ a+2P+2q^- -1
\]
is a block of consecutive composite integers.

Since \(P\) contains the factors \(2,q^-,q\), and \(q>q^-\), we have
\[
        P\ge2q^-q>2q^-.
\]
Thus the above block is contained in \((2P,4P)\).  Therefore there is a prime gap \(G_-\) surrounding this block and satisfying
\begin{equation}\label{eq:Gminus-block}
G_-\ge2q^-.
\end{equation}

It remains to lower-bound \(q^-\).  By \eqref{eq:short-interval-prime}, applied to \(x/2\), there is a prime
\[
        s\in
        \left(\frac x2,\ \frac x2\left(1+\frac1{2\log^2(x/2)}\right)\right].
\]
By \eqref{eq:logxhalf-error}, this upper endpoint is \(<x\), while \(q>x\).  Therefore
\begin{equation}\label{eq:qminus-half}
q^-\ge s>\frac x2.
\end{equation}
Apply \eqref{eq:short-interval-prime} once more, now to \(q^-\).  Since \(q\) is the first prime after \(q^-\),
\[
        q\le q^-\left(1+\frac1{2\log^2q^-}\right).
\]
Together with \(q>x\) and \(q^->x/2\), this gives
\[
        q^-
        >
        \frac{x}{1+\frac1{2\log^2q^-}}
        >
        \frac{x}{1+\frac1{2\log^2(x/2)}}.
\]
Using \eqref{eq:logxhalf-error},
\[
        q^->\frac{x}{1.00321}.
\]
Therefore, by \eqref{eq:Gminus-block},
\begin{equation}\label{eq:Gminus-lower}
G_->\frac{2x}{1.00321}>1.993x.
\end{equation}

\subsection{A smaller later gap inside \texorpdfstring{\((4P,8P]\)}{(4P,8P]}}

Let
\[
        M:=\log(8P),
        \qquad
        N:=\pi(8P)-\pi(4P).
\]
By \eqref{eq:logP-lower}, \(M>20\).  By \eqref{eq:pi-lower} and \eqref{eq:pi-upper},
\[
        N
        \ge
        \frac{8P}{M-1}
        -
        \frac{4P}{M-\log2-1.1}.
\]
Set
\[
        D(M):=\frac8{M-1}-\frac4{M-\log2-1.1}.
\]
A direct calculation gives
\[
        D(M)-\frac4M
        =
        \frac{
        2\bigl((9-10\log2)M-(10\log2+11)\bigr)
        }
        {
        5M(M-1)(M-\log2-1.1)
        }.
\]
For \(M>20\),
\[
        9-10\log2>2,
        \qquad
        10\log2+11<18.
\]
Hence
\[
        (9-10\log2)M-(10\log2+11)>2M-18.
\]
Also
\[
        M(M-1)(M-\log2-1.1)<M^3.
\]
Thus
\[
        D(M)-\frac4M
        >
        \frac{2(2M-18)}{5M^3}
        >
        \frac8{M^3}.
\]
Therefore
\[
        D(M)>\frac4M+\frac8{M^3},
\]
and so
\[
        N>\frac{4P}{M}+\frac{8P}{M^3}.
\]
Since \(8P=\e^M\) and \(M>20\),
\[
        \frac{8P}{M^3}=\frac{\e^M}{M^3}>1.
\]
Consequently,
\begin{equation}\label{eq:N-lower}
N>\frac{4P}{M}+1.
\end{equation}

Let the primes in \((4P,8P]\) be
\[
        s_1<s_2<\cdots<s_N.
\]
Any consecutive pair \(s_\ell,s_{\ell+1}\) is also consecutive among all primes, since every prime between them would also lie in \((4P,8P]\).  Moreover
\[
        \sum_{\ell=1}^{N-1}(s_{\ell+1}-s_\ell)
        =
        s_N-s_1
        <4P.
\]
By \eqref{eq:N-lower}, some global prime gap \(G_+\) in \((4P,8P]\) satisfies
\[
        G_+<\frac{4P}{N-1}<M.
\]
Using \eqref{eq:log8P-upper},
\begin{equation}\label{eq:Gplus-upper}
G_+<M<1.003x.
\end{equation}
This gap \(G_+\) occurs after \(G_-\).  Indeed, \(G_-\) surrounds a composite block lying inside \((2P,4P)\), while \(G_+\) lies inside \((4P,8P]\).  If the right endpoint of \(G_-\) is the first prime in \((4P,8P]\), then \(G_+\) starts at that prime or later, and hence still occurs after \(G_-\).

\subsection{The large gap gives a strict descent}

Let \(G_-\) occur between the consecutive primes
\[
        p_i,\ p_{i+1}.
\]
We first show that
\begin{equation}\label{eq:piminus-P}
p_{i-1}>P.
\end{equation}
Let \(L:=\log P\).  By \eqref{eq:logP-lower}, \(L>20\).  From \eqref{eq:pi-lower} and \eqref{eq:pi-upper},
\[
\begin{aligned}
        \pi(2P)-\pi(P)
        &\ge
        \frac{2P}{L+\log2-1}
        -
        \frac{P}{L-1.1}\\
        &=
        P\left(
        \frac2{L+\log2-1}
        -
        \frac1{L-1.1}
        \right).
\end{aligned}
\]
For \(L>20\),
\[
        \frac2{L+\log2-1}
        -
        \frac1{L-1.1}
        >
        \frac1{2L}.
\]
Thus
\[
        \pi(2P)-\pi(P)>\frac{P}{2\log P}>2.
\]
Hence \((P,2P]\) contains at least two primes.  Since the block of composites surrounded by \(G_-\) starts after \(2P\), the prime immediately preceding the left endpoint of \(G_-\) is \(>P\).  This proves \eqref{eq:piminus-P}, and therefore
\begin{equation}\label{eq:A-piminus}
A(p_{i-1})\ge A(P).
\end{equation}
Once \eqref{eq:AminusW-lower} below is proved, \eqref{eq:A-piminus} also implies \(A(p_{i-1})>W_{r-1}\), hence \(i-1\ge r\) and Lemma \ref{lem:Rbounds} is applicable at the descent gap.

We next prove
\begin{equation}\label{eq:AminusW-lower}
A(P)-W_{r-1}>0.56\log r.
\end{equation}
By the definition of \(W_N\),
\[
        W_{r-1}=A(p_{r-1}).
\]
By \eqref{eq:nth-prime-upper}, applied to \(n=r-1\), with \(t:=\log(r-1)\),
\[
        p_{r-1}
        \le
        (r-1)\left(
        t+\log t-1+\frac{\log t-2}{t}
        \right).
\]
Since \(r\ge8{,}600{,}001\), we have \(t>15.96\).  Define
\[
        h(t):=0.2t-\log t+1-\frac{\log t-2}{t}.
\]
Then
\[
        h'(t)=0.2-\frac1t+\frac{\log t-3}{t^2}.
\]
For \(t\ge15.96\),
\[
        h'(t)>0.2-\frac1{15.96}-\frac{0.230}{15.96^2}>0,
\]
and
\[
        h(15.96)>1.37.
\]
Hence \(h(t)>0\), which is equivalent to
\[
        \log t-1+\frac{\log t-2}{t}<0.2t.
\]
Therefore
\[
        p_{r-1}<1.2(r-1)t<1.2r\log r.
\]
It follows that
\begin{equation}\label{eq:W-upper-by-A}
W_{r-1}=A(p_{r-1})
        \le
        A(1.2r\log r).
\end{equation}
Since
\[
        \log(1.2r\log r)>18.9,
\]
\eqref{eq:A-upper-weak} and \eqref{eq:eps-small} give
\begin{equation}\label{eq:W-log-upper}
W_{r-1}
        \le
        \log\log(1.2r\log r)+B+1.001.
\end{equation}
On the other hand, by \eqref{eq:logP-lower},
\[
        \log P>0.9x>18.9.
\]
Thus \eqref{eq:A-lower-weak} and \eqref{eq:eps-small} imply
\begin{equation}\label{eq:A-P-log-lower}
A(P)
        \ge
        \log\log P+B-0.001
        >
        \log(0.9x)+B-0.001.
\end{equation}
Subtracting \eqref{eq:W-log-upper} from \eqref{eq:A-P-log-lower},
\begin{equation}\label{eq:AminusW-log}
A(P)-W_{r-1}
        >
        \log(0.9x)-\log\log(1.2r\log r)-1.002.
\end{equation}
We now bound the second logarithm.  Since \(v=\log r\ge15.96\), the function
\[
        0.2v-\log(1.2v)
\]
is increasing and is \(>0.23\) at \(v=15.96\).  Therefore
\[
        \log(1.2v)<0.2v,
\]
and hence
\[
        \log(1.2r\log r)
        =
        v+\log(1.2v)
        <
        1.2v.
\]
Thus
\begin{equation}\label{eq:loglog-upper}
\log\log(1.2r\log r)<\log(1.2v).
\end{equation}
Since
\[
        x=0.99\,\frac r v,
\]
we have
\[
        \log(0.9x)=v-\log v+\log(0.9\cdot0.99).
\]
Using \eqref{eq:AminusW-log} and \eqref{eq:loglog-upper},
\[
\begin{aligned}
        A(P)-W_{r-1}
        &>
        v-2\log v+\log(0.9\cdot0.99)-\log1.2-1.002\\
        &>
        v-2\log v-1.300.
\end{aligned}
\]
By \eqref{eq:tail-elementary-one},
\[
        v-2\log v-1.300>0.56v.
\]
Therefore \eqref{eq:AminusW-lower} is proved.

Now Lemma \ref{lem:Rbounds}, together with \eqref{eq:A-piminus} and \eqref{eq:AminusW-lower}, gives
\[
\begin{aligned}
        R_r(i-1)
        &\le
        \frac r{A(p_{i-1})-W_{r-1}}\\
        &\le
        \frac r{A(P)-W_{r-1}}\\
        &<
        \frac r{0.56\log r}.
\end{aligned}
\]
Since \(x=0.99r/\log r\),
\begin{equation}\label{eq:R-descent-upper}
R_r(i-1)
        <
        \frac{x}{0.99\cdot0.56}
        <1.805x.
\end{equation}
By \eqref{eq:Gminus-lower},
\[
        G_->1.993x.
\]
Thus
\[
        R_r(i-1)<1.805x<1.993x<G_-+1.
\]
Lemma \ref{lem:threshold} gives
\begin{equation}\label{eq:strict-descent-tail}
d_{r+1}(p_{i+1})<d_{r+1}(p_i).
\end{equation}

\subsection{The later small gap gives a strict ascent}

Let \(G_+\) occur between the consecutive primes
\[
        p_j,\ p_{j+1}.
\]
Since \(G_+\) lies in \((4P,8P]\),
\[
        p_{j-1}<p_j\le8P.
\]
Also \(p_{j-1}>P\), because \((P,2P]\) contains at least two primes and \(p_j>4P\).  Hence, by \eqref{eq:AminusW-lower}, \(A(p_{j-1})>W_{r-1}\), and in particular \(j-1\ge r\).  Lemma \ref{lem:Rbounds} gives
\[
        R_r(j-1)
        \ge
        \frac r{A(p_{j-1})}.
\]
Since \(p_{j-1}\le8P\),
\[
        A(p_{j-1})\le A(8P).
\]
Therefore
\begin{equation}\label{eq:R-ascent-pre}
R_r(j-1)\ge\frac r{A(8P)}.
\end{equation}
By \eqref{eq:A-upper-weak}, \eqref{eq:eps-small}, and \(M=\log(8P)>18.9\),
\[
        A(8P)\le \log\log(8P)+B+1.001.
\]
By \eqref{eq:log8P-upper},
\[
        \log(8P)<1.003x.
\]
Since \(B<0.262\),
\begin{equation}\label{eq:A-8P-upper}
A(8P)
        <
        \log(1.003x)+B+1.001
        <
        \log x+1.266.
\end{equation}
Now
\[
        \log x=v-\log v+\log0.99.
\]
For \(v\ge15.96\),
\[
        -\log v+\log0.99+1.266<-1.50.
\]
Thus
\begin{equation}\label{eq:A-8P-v}
A(8P)<v-1.50.
\end{equation}
Combining \eqref{eq:R-ascent-pre} and \eqref{eq:A-8P-v},
\[
        R_r(j-1)>
        \frac r{v-1.50}.
\]
Since \(r=xv/0.99\),
\[
        R_r(j-1)
        >
        x\cdot \frac{v}{0.99(v-1.50)}.
\]
By \eqref{eq:tail-elementary-two},
\begin{equation}\label{eq:R-ascent-lower}
R_r(j-1)>1.010x.
\end{equation}
On the other hand, by \eqref{eq:Gplus-upper},
\[
        G_+<1.003x.
\]
Since \(x>533{,}000\), we have \(1<0.001x\), and so
\[
        G_+ +1<1.004x<1.010x<R_r(j-1).
\]
By Lemma \ref{lem:threshold},
\begin{equation}\label{eq:strict-ascent-tail}
d_{r+1}(p_{j+1})>d_{r+1}(p_j).
\end{equation}

Equations \eqref{eq:strict-descent-tail} and \eqref{eq:strict-ascent-tail} show that the sequence \(p\mapsto d_{r+1}(p)\) first has a strict descent and later has a strict ascent.  Such a sequence cannot be unimodal.  Therefore we have proved:

\begin{proposition}\label{prop:tail}
For every integer
\[
        k\ge8{,}600{,}002,
\]
the sequence \(p\mapsto d_k(p)\) is not unimodal.
\end{proposition}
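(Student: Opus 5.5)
The proposition is exactly the conclusion of the argument developed in Section~\ref{sec:tail}. My plan is to assemble the pieces already established there, with $r=k-1\ge 8{,}600{,}001$ fixed and $x=0.99\,r/\log r$.

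\textbf{Descent.} First, I produce the prime $q$ from \eqref{eq:short-interval-prime} and form the primorial $P$. The Chinese-remainder construction then gives a block of $2q^- -1$ consecutive composites inside $(2P,4P)$. This yields a gap $G_->1.993x$ by \eqref{eq:Gminus-lower}. Next, I verify that Lemma~\ref{lem:Rbounds} applies at the left endpoint $p_i$ of this gap. The key inputs are $p_{i-1}>P$, shown via two primes in $(P,2P]$, and the separation estimate \eqref{eq:AminusW-lower}, namely $A(P)-W_{r-1}>0.56\log r$. Together these give $R_r(i-1)<1.805x<G_-+1$, so Lemma~\ref{lem:threshold} yields the strict descent \eqref{eq:strict-descent-tail}.

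\textbf{Ascent.} Second, I use the average-gap argument in $(4P,8P]$. The explicit bounds \eqref{eq:pi-lower} and \eqref{eq:pi-upper} give $N>4P/M+1$ primes there. Hence some consecutive gap satisfies $G_+<M=\log(8P)<1.003x$, which is \eqref{eq:Gplus-upper}. At this gap I apply the lower bound of Lemma~\ref{lem:Rbounds} together with $A(8P)<\log r-1.50$. This gives $R_r(j-1)>1.010x>G_++1$, and Lemma~\ref{lem:threshold} yields the strict ascent \eqref{eq:strict-ascent-tail}.

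\textbf{Ordering and conclusion.} Since $G_-$ surrounds a block inside $(2P,4P)$ and $G_+$ lies in $(4P,8P]$, the descent index $i$ is strictly less than the ascent index $j$. The elementary observation from the introduction then shows that the sequence is not unimodal.

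\textbf{Main obstacle.} The delicate step is the descent. There the constants must be balanced: the gap $G_-\approx 2x$ has to beat $r/(A(P)-W_{r-1})$. This requires $A(P)-W_{r-1}$ to exceed roughly $\tfrac12\log r$. That in turn needs the sharp comparison $\log\log P\approx \log x\approx \log r-\log\log r$ against $W_{r-1}\approx\log\log(r\log r)+B+C$, via the inequality \eqref{eq:tail-elementary-one}. The margin only becomes comfortable once $\log r\ge 15.96$, which is exactly why the finite range in Proposition~\ref{prop:finite} is needed to cover smaller $k$.
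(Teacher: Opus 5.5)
Your proposal is correct and follows the paper's proof in Section~\ref{sec:tail} step for step. It uses the Chinese-remainder composite block in $(2P,4P)$ with $G_->1.993x$ and $A(P)-W_{r-1}>0.56\log r$ for the strict descent, the average-gap argument in $(4P,8P]$ with $G_+<1.003x$ and $A(8P)<\log r-1.50$ for the strict ascent, and the ordering $i<j$. The one detail you leave implicit is that $j-1\ge r$ at the ascent gap, which Lemmas~\ref{lem:Rbounds} and~\ref{lem:threshold} require; it follows exactly as at the descent, from $p_{j-1}>P$ and $A(P)>W_{r-1}$.
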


\section{Conclusion}

Proposition \ref{prop:finite} proves the theorem for
\[
        4\le k\le8{,}600{,}001,
\]
and Proposition \ref{prop:tail} proves it for
\[
        k\ge8{,}600{,}002.
\]
Together these two propositions prove Theorem \ref{thm:main} for every \(k\ge4\).  Combining this with Cambie's theorem for \(1\le k\le3\) proves Corollary \ref{cor:classification}.

\section*{Acknowledgements}

We thank Samuele Marro and Marcello Politi for early conversations about using multi-agent systems to tackle difficult scientific problems, for their helpful input on the early design of the Multiscalar Fields System, and for following the progress of this project.

We are grateful to Xiaozheng Han and Yaoping Xie, Ph.D. students at the School of Mathematical Sciences, Peking University, for independently checking the proof.

The Multiscalar Fields System used AI models as part of its proof search and verification workflow, including OpenAI’s gpt-5.4-pro and gpt-5.5, and Anthropic’s Claude Opus 4.7.

\appendix
\section{Numerical Verification}\label{app:verification}

A companion numerical verifier is publicly available at
\url{https://github.com/multiscalar/results/blob/main/erdos-690/numerical_verifier.py}.

The verifier certifies the explicit finite numerical comparisons appearing in the proof, rather than to re-prove the external analytic estimates or the published prime-record inputs cited in Section \ref{sec:inputs}. Conditional on those cited inputs, the script mechanically checks the displayed numerical inequalities used in the argument.

More precisely, it verifies:
\begin{itemize}
\item the finite prime sums in Section \ref{sec:finite}, using exact rational arithmetic;
\item the elementary symmetric-polynomial values in Table \ref{tab:smallcases}, using exact rational arithmetic;
\item the interval for \(C=\sum_p \frac{1}{p(p-1)}\), using directed outward-rounded summation for the finite partial sum together with the exact tail bound from Certificate \ref{cert:constants};
\item the displayed logarithmic numerical comparisons, using rational interval arithmetic for logarithms based on the \(\operatorname{atanh}\) series with a rigorous remainder bound.
\end{itemize}

The verifier does not use binary floating-point arithmetic.  Finite prime sums and symmetric-polynomial values are computed exactly with rational arithmetic, while the enclosure for \(C\) and the logarithmic inequalities are handled by rigorous outward-rounded interval methods.

\end{document}